\newtheorem{thm}{Theorem}[section]
\newtheorem{cor}[thm]{Corollary}
\newtheorem{lem}[thm]{Lemma}
\newtheorem{prop}[thm]{Proposition}
\newcommand{\ds}{\displaystyle}
\theoremstyle{definition}
\newtheorem{defin}[thm]{Definition}
\newtheorem{rem}[thm]{Remark}
\newtheorem{exa}[thm]{Example}
\numberwithin{equation}{section}
\begin{document}
\date{}
\title{\textsc{\LARGE{some quantitative results on lipschitz inverse and implicit functions theorems}}}
%\author{\textbf{Phan Phien}\\ \textit{\small{ Department of Natural Sciences, Nha trang College of Education}}\\ \textit{\small{1 Nguyen Chanh, Nha Trang, Vietnam}}\\
%\textit{\small{e-mail: phieens@yahoo.com}}}
\maketitle
\begin{center}
\textbf{Phan Phien}

\vspace{12pt}
\textit{\small{Department of Natural Sciences}}\\
\textit{\small{Nha Trang College of Education}}\\ 
\textit{\small{1 Nguyen Chanh, Nha Trang, Vietnam}}\\
\textit{\small{e-mail: phieens@yahoo.com}}
\end{center}
\renewcommand{\thefootnote}{}
\footnote{\textit{ }}
\footnote{\textbf{Key words:} Inverse and implicit function, Lipschitz mapping, Generalized Jacobian, Quantitative.}
\footnote{2010 AMS Mathematics Subject Classification: Primary 47J07; Secondary 15A60, 34D10.}
\renewcommand{\thefootnote}{\arabic{footnote}}
\setcounter{footnote}{0}
\begin{abstract}
Let $ f: \mathbb{R} ^ n \rightarrow \mathbb{R}^n $ be a Lipschitz mapping with generalized Jacobian at $x_0$, denoted by $\partial f(x_0)$, is of maximal rank. F. H. Clarke (1976) proved that $f$ is locally invertible. In this paper, we give some quantitative assessments for Clarke's theorem on the Lipschitz inverse, and prove that the class of such mappings are open. Moreover, we also present a quantitative form for Lipschitz implicit function theorem.
\end{abstract}
\baselineskip=17pt
%\subjclass[2010]{Primary 47J07; Secondary 15A60, 34D10}
%\keywords{Inverse and implicit function, Lipschitz mapping, Generalized Jacobian}
\section{Introduction}
Classical inverse and implicit function theorems have attracted many researchers because of their applications in mathematics.  These theorems  are stated for the  class of $C^k$ mappings, and there have been researches for non-smooth mappings and global expansions. We recall some typical results of them.

\vspace*{8pt} \noindent
F. H. Clarke (1976 - \cite{C1}) presented local inverse function theorem for Lipschitz mappings.
Let $ f: \mathbb{R}^n \rightarrow  \mathbb{R}^n $ be a Lipschitz mapping in a neighborhood of $x_0 \in \mathbb{R}^n$. If the generalized Jacobian $ \partial f(x_0)$ at $ x_0 $  is of maximal rank (see Def. \ref{dn2}, \ref{dn3} below), then there exist neighborhoods $ U $ and $ V $ of $ x_0 $ and $ f(x_0) $, respectively, and a Lipschitz function $g: V  \rightarrow  \mathbb{R}^n$ such that

(a) $g(f(u)) = u $ for all $ u \in U$,

(b) $f(g (v)) = v $ for all $ v \in V$.

\vspace * {8pt} \noindent
More general, M. S. Gowda (2004 -\cite{Go}) considered inverse and implicit function theorems for the class of H - differentiable mappings. For the global case,
J. Hadamard (1906 -\cite{H}) presented global diffeomorphism conditions for $ C^1$ mapping class. Generalizing the results of J. Hadamard, P. J. Rabier (1997 -\cite{R}) demonstrated the results for $C^1$ mapping class  on smooth manifolds. O. Gut\'{u} and J. A. Jaramillo (2007 -\cite{Gu-J}) demonstrated global invertible conditions for the class of quasi-isometric mappings between complete metric spaces. Recently, T. Fukui, K. Kurdyka, and L. Paunescu (2010 -\cite{F-K-P}) demonstrated some global inverse function theorems for the class of tame continuous mappings.

\vspace*{8pt} Most of the results on the inverse and implicit function theorems only show the existence of neighborhoods $ U $ and $ V $ to ensure $ f: U \rightarrow V $ is invertible. The quantitative assessments for the subjects in the results were not considered.
It is necessary to use the quantitative assessments for these theorems in several different fields such as: number theory, optimization, theory of measurement, assessment of complex algorithms, ...

\vspace*{8pt} Up to now, in  general case, the problem of quantitative assessment for the classical inverse and implicit function theorems  is unresolved. For the case $ n\leq 2$, P. Henrici (1988 - \cite{He}) gave a quantitative form for analytic function (one variable). Recently, D. Cohen (2005 - \cite{Y}) gave a different proof for the case of analytic functions.  Under the result,  a quantitative form for the theorems in the case of analytic function with two variables was given.

\vspace*{8pt} In this paper, we present a quantitative form for the Clarke inverse function theorem, where $U$, $V$ and the Lipschitz constant of inverse mapping are evaluated quantitatively by  $\partial f (x_0)$. Moreover, we also give a quantitative form for Lipschitz implicit function theorem and prove  that the class of Lipschitz mappings satisfying Clarke's theorem are open: If $f$ is perturbed by a mapping $h$ with  the Lipschitz constant small enough, then the mapping $f + h$ is locally invertible.

The remaining of the paper is organized as follows.
In Section 2 we introduce necessary concepts and results.
Section 3 presents the main results and examples.

\section{Preliminaries}
\subsection{Perturbations and the Inverse}
We give here some definitions, notations and results that will be used later.
%Let $\mathbf{M}_{m\times n}$ denote the vector space of real $m \times n$ matrices,
\begin{itemize}
\item[] Let $\mathbf{M}_{m\times n}$ denote the vector space of real $m \times n$ matrices,
\item[] $\|x\| =(|x_1|^2 + \cdots + |x_n|^2)^{\frac{1}{2}}$, ~where $x\in \mathbb{R}^n$,\\
$\mathbf{B}^n$ denotes the unit ball in $ \mathbb{R}^n$, $\mathbf{B}_r^n$ denotes the ball of radius $r$, centered at $0\in \mathbb{R}^n$, $\mathbf{B}_r^n(x_0)$ denotes the ball of radius $r$, centered at $x_0\in \mathbb{R}^n$, and $\mathbf{S}^{n-1}$ denote the unit sphere in $\mathbb{R}^n$,
\item[] $\|A\| = \max_{\|x\|=1}\|Ax\|, ~$where $A \in \mathbf{M}_{m\times n}$,
\item[] $\|A\|_F = \ds\left(\sum_{i=1}^m\sum_{j=1}^n\|a_{ij}^2\|\right)^\frac{1}{2},$ where $A$ is a $m\times n$ matrix,
    \item[] If $ A \in \mathbf{M}_{n\times n}$ is an invertible matrix, then
\[\|A^{-1}\|=\frac{1}{\min_{\|x\|=1}\|Ax\|}.\]
\end{itemize}
\noindent Matrix norms have some of the following properties:
\begin{itemize}
\item[(i)] $\|AB\| \leq \|A\|\|B\|$, $\|AB\|_F \leq \|A\|_F\|B\|_F$.
\item[(ii)] $\|A\| \leq \|A\|_F \leq \sqrt{n}\|A\|$, where $A$ is a $m\times n$ matrix.
\item[(iii)] For all $ A \in \mathbf{M}_{m\times n}$ and $ x \in \mathbb{R}^n$, we have $\|Ax\| \leq \|A\| \|x\|$.
\end{itemize}
We topologize $\mathbf{M}_{m\times n}$ with the norm $\|\cdot\|$, and $\mathcal{B}_{m\times n}$ denote the unit ball in $\mathbf{M}_{m\times n}$.
\begin{lem}\label{bd1}
 If $F \in \mathbf{M}_{n\times n}$ and $\|F\|< 1$, then $I - F$ is nonsingular and
  \[(I - F)^{-1} = \sum_{k=0}^\infty F^k\]
 with
 \[\|(I - F)^{-1}\| \leq \frac{1}{1 - \|F\|}.\]
 \end{lem}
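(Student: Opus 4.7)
The plan is to prove this by establishing the Neumann series representation directly. Since $\mathbf{M}_{n\times n}$ equipped with the operator norm is a finite-dimensional, hence complete, normed space, I will first verify absolute convergence of the partial sums $S_N := \sum_{k=0}^N F^k$ and then identify the limit with $(I-F)^{-1}$.

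First I would use submultiplicativity of the operator norm (property (i) stated just before the lemma) to get $\|F^k\| \leq \|F\|^k$ for every $k \geq 0$. Because $\|F\| < 1$, the geometric series $\sum_{k=0}^\infty \|F\|^k$ converges to $\frac{1}{1-\|F\|}$, so the partial sums $S_N$ form a Cauchy sequence in $\mathbf{M}_{n\times n}$, and by completeness they converge to some matrix $S = \sum_{k=0}^\infty F^k$.

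Next I would show $(I-F)S = S(I-F) = I$ by telescoping. A direct computation yields $(I-F)S_N = I - F^{N+1}$, and since $\|F^{N+1}\| \leq \|F\|^{N+1} \to 0$ as $N \to \infty$, taking the limit gives $(I-F)S = I$. An identical argument with the multiplication on the other side shows $S(I-F) = I$. Hence $I-F$ is nonsingular with $(I-F)^{-1} = S$.

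Finally, to bound the norm, I would apply the triangle inequality to each partial sum and pass to the limit:
\[
\|S\| = \lim_{N\to\infty} \|S_N\| \leq \lim_{N\to\infty} \sum_{k=0}^N \|F\|^k = \frac{1}{1 - \|F\|}.
\]
There is no real obstacle here; the only point requiring care is justifying that $(I-F)S_N \to (I-F)S$ in norm, which follows from continuity of matrix multiplication, i.e. $\|(I-F)(S-S_N)\| \leq \|I-F\| \cdot \|S - S_N\| \to 0$.
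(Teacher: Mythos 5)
Your proof is correct and is the standard Neumann-series argument; the paper itself gives no proof but simply cites Golub and Van Loan's Lemma 2.3.3, which proves the result in exactly this way (geometric series bound via submultiplicativity, telescoping identity $(I-F)S_N = I - F^{N+1}$, and the triangle inequality for the norm estimate). So you have, in effect, supplied the proof that the paper delegates to its reference.
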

\begin{proof} See \cite[Lemma 2.3.3]{G-L}.
\end{proof}
\noindent Based on Lemma \ref{bd1}, we have the following theorem.
\begin{thm}\label{dl1}
Let $A, E \in \mathbf{M}_{n\times n}$. If $A$ is nonsingular and $r = \|A^{-1}E\| < 1$, then $A + E$ is nonsingular and $\|(A + E)^{-1} - A^{-1}\| \leq \|E\|\|A^{-1}\|^2/(1-r)$.
\end{thm}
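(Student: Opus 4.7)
The plan is to reduce the statement to Lemma \ref{bd1} by factoring $A+E$ through $A$. Specifically, I would write
\[
A + E \;=\; A\bigl(I - F\bigr), \qquad F := -A^{-1}E,
\]
so that $\|F\| = \|A^{-1}E\| = r < 1$. Lemma \ref{bd1} then applies directly to $I - F$: it is nonsingular, $(I-F)^{-1} = \sum_{k\ge 0} F^k$, and $\|(I-F)^{-1}\| \le 1/(1-r)$. Since $A$ is invertible by hypothesis and $I-F$ is invertible by the lemma, the product $A(I-F) = A+E$ is invertible, with $(A+E)^{-1} = (I-F)^{-1}A^{-1}$.

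For the quantitative bound, I would subtract $A^{-1}$ from this identity and simplify using the Neumann series. The key algebraic step is
\[
(I-F)^{-1} - I \;=\; \sum_{k=1}^\infty F^k \;=\; F\sum_{k=0}^\infty F^k \;=\; F(I-F)^{-1},
\]
which gives
\[
(A+E)^{-1} - A^{-1} \;=\; \bigl[(I-F)^{-1} - I\bigr]A^{-1} \;=\; -A^{-1}E\,(I-F)^{-1}A^{-1}.
\]
Taking norms and using submultiplicativity (property (i)) together with the bound from Lemma \ref{bd1} yields
\[
\|(A+E)^{-1} - A^{-1}\| \;\le\; \|A^{-1}\|\,\|E\|\,\|(I-F)^{-1}\|\,\|A^{-1}\| \;\le\; \frac{\|E\|\,\|A^{-1}\|^2}{1-r},
\]
which is exactly the desired estimate.

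There is no real obstacle here; the whole argument is a two-line manipulation once one spots the correct factorization $A+E = A(I-F)$. The only point that requires mild care is making sure the Neumann-series identity is used in the right order so that the factor $\|E\|$ (and not $\|A^{-1}E\|$) appears in the numerator — this is why I would first isolate $A^{-1}E$ as a single block inside the difference rather than expanding $F = -A^{-1}E$ too early. Once that is done, the inequality follows immediately.
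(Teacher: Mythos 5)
Your proof is correct and is essentially the standard argument; the paper itself gives no proof, only a citation to Golub and van Loan's \emph{Matrix Computations} (Theorem 2.3.4), and the argument there is exactly this one: factor $A+E = A(I-F)$ with $F = -A^{-1}E$, invoke the Neumann-series lemma, and rearrange so that a bare factor of $\|E\|$ (rather than $\|A^{-1}E\|$) appears in the bound.
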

\begin{proof} See \cite[Theorem 2.3.4]{G-L}.
\end{proof}
\subsection{Generalized Jacobians}
\begin{defin}\label{dn1}
A mapping $ f: \mathbb{R}^m  \rightarrow  \mathbb{R}^n $ is called \textbf{Lipschitz} in a neighborhood of a point $ x_0 $ in $ \mathbb{R}^n $  if there exist a constant $ K $ such that for all $x$ and $y$ near $ x_0 $, we have
\begin{equation}\label{ct1}
\|f(x) - f(y)\|\leq K\|x-y\|.
\end{equation}
\noindent
 If $K \geq 1$ and
 \[\frac{1}{K}\|x-y\| \leq \|f(x) - f(y)\| \leq K \|x-y\|,\]
then $f$ is called \textbf{bi-Lipschitz} or $K$-bi-Lipschitz.
 \end{defin}
 \begin{thm}[Rademacher] \label{dlr} If $f: \mathbb{R}^m \rightarrow \mathbb{R}^n$ is Lipschitz, then $f$ is almost everywhere differentiable.
\end{thm}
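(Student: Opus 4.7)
The plan is to reduce to the scalar case and then follow the classical Rademacher route. First, by applying the statement to each component $f_i$ separately, it suffices to assume $n = 1$, so that $f: \mathbb{R}^m \to \mathbb{R}$ is Lipschitz with constant $K$. I would then establish existence of directional derivatives almost everywhere: for a fixed $v \in \mathbf{S}^{m-1}$, the restriction of $f$ to any affine line parallel to $v$ is $K$-Lipschitz in one real variable, hence absolutely continuous, hence classically differentiable almost everywhere on the line. Applying Fubini's theorem to the foliation of $\mathbb{R}^m$ by such lines produces a null set $N_v$ outside of which the directional derivative
\[
D_v f(x) \;:=\; \lim_{t \to 0} \frac{f(x + tv) - f(x)}{t}
\]
exists. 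Choosing a countable dense subset $D \subset \mathbf{S}^{m-1}$ containing the standard basis vectors $e_1, \ldots, e_m$ and setting $N := \bigcup_{v \in D} N_v$ yields a single null set outside of which $D_v f$ is defined for every $v \in D$; in particular the coordinate partials $D_{e_i} f$ exist a.e., and I write $\nabla f(x) := (D_{e_1} f(x), \ldots, D_{e_m} f(x))$.

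The crucial step is to promote the pointwise existence of $D_v f(x)$ to a \emph{linear} dependence on $v$. My approach is to identify $D_v f$ with a weak derivative: for any test function $\varphi \in C_c^\infty(\mathbb{R}^m)$ and any $v$, the difference quotients $t^{-1}(f(\cdot + tv) - f)$ are uniformly bounded by $K$ and converge a.e.\ to $D_v f$, so dominated convergence together with a translation change of variables gives
\[
\int D_v f \cdot \varphi \, dx \;=\; -\int f \cdot D_v \varphi \, dx \;=\; -\sum_{i=1}^m v_i \int f \cdot D_{e_i} \varphi \, dx \;=\; \sum_{i=1}^m v_i \int D_{e_i} f \cdot \varphi \, dx.
\]
Since $\varphi$ was arbitrary, this yields $D_v f(x) = \langle v, \nabla f(x) \rangle$ for a.e.\ $x$. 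Running this identity over the countable set $D$ and discarding one more null set produces a full-measure set $E$ on which $D_v f(x) = \langle v, \nabla f(x) \rangle$ holds for every $v \in D$ simultaneously.

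Finally, I would upgrade pointwise existence along a dense set of directions to full Fr\'echet differentiability at every $x \in E$. The key observation is that $v \mapsto t^{-1}(f(x + tv) - f(x))$ is $K$-Lipschitz in $v$, so a finite $\varepsilon$-net $v_1, \ldots, v_N \in D$ of $\mathbf{S}^{m-1}$ suffices: one chooses $\delta > 0$ small enough that $|f(x + tv_j) - f(x) - t\langle v_j, \nabla f(x)\rangle| \le \varepsilon |t|$ for all $j \le N$ and all $|t| \le \delta$, and for an arbitrary small $h$ one picks $v_j$ with $\|h/\|h\| - v_j\| \le \varepsilon$; combining the $K$-Lipschitz bound on $f$ with $\|\nabla f(x)\| \le K$ controls both error pieces by a constant multiple of $\varepsilon \|h\|$, giving the Fr\'echet estimate.

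The main obstacle is the linearity step in the second paragraph. A priori the map $v \mapsto D_v f(x)$ is only positively homogeneous and $K$-Lipschitz, and its pointwise identification with the linear functional $\langle v, \nabla f(x)\rangle$ requires the Lipschitz bound in an essential way via the weak/distributional formulation and dominated convergence. The remaining ingredients (Fubini, one-variable absolute continuity, the covering/approximation in the last step) are standard once the linearity has been secured.
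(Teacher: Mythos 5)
The paper does not give an in-text proof of Rademacher's theorem; it simply cites Federer \cite[Theorem 3.1.6]{F}. Your sketch is the standard classical proof (reduce to scalar-valued $f$, get directional derivatives a.e.\ via one-variable absolute continuity plus Fubini, establish linearity in $v$ through the weak-derivative identity obtained by dominated convergence and integration by parts, then upgrade to Fr\'echet differentiability at a.e.\ point using a finite $\varepsilon$-net in a fixed countable dense set of directions together with the $K$-Lipschitz control on difference quotients), and it is correct and complete modulo routine measurability remarks; it is essentially the same route as the cited reference.
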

\begin{proof}See \cite[Theorem 3.1.6]{F}.
\end{proof}
\noindent The usual $n \times m$ Jacobian matrix of partial derivatives of $f$ at $x$, when it exists, is denoted by $Jf(x)$. By Rademacher's theorem, we have the following definition.
\begin{defin}[F. H. Clarke - \textrm{\cite{C1}, \cite{C2}}]\label{dn2}  The \textbf{Generalized Jacobian} of $f$ at $x_0$, denoted by $\partial f(x_0)$, is the convex hull of all matrices $M$ of the form
\[M = \lim_{i\rightarrow \infty}Jf(x_i),\]
where $x_i$ converges to $x_0$ and $f$ is differentiable at $x_i$ for each $i$.\\
When $f: \mathbb{R}^m \rightarrow \mathbb{R}$, $\partial f(x_0)$ is called the generalized gradient of $f$ at $x_0$.
\end{defin}
\begin{defin} \label{dn3} $\partial f(x_0)$ is said to be of \textbf{maximal rank} if every $M$ in $\partial f(x_0)$ is of maximal rank.
\end{defin}
\begin{rem}
From (\ref{ct1}), $ \partial f(x) $ is bounded in the neighborhood of $x_0$.
\end{rem}
\begin{prop}[\cite{C1}]\label{md1}
$\partial f(x_0)$ is a nonempty compact convex subset of $\mathbf{M}_{n\times m}$.
\end{prop}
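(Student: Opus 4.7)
The plan is to verify the three claimed properties in turn, using Rademacher's theorem (Theorem~\ref{dlr}) together with the Lipschitz bound on Jacobians and the fact that, in finite dimensions, the convex hull of a compact set is compact. Let $K$ be a Lipschitz constant for $f$ on some neighborhood of $x_0$, and let
\[ S = \bigl\{ M \in \mathbf{M}_{n\times m} : M = \lim_{i\to\infty} Jf(x_i) \text{ for some } x_i \to x_0 \text{ with } f \text{ differentiable at each } x_i \bigr\}, \]
so that $\partial f(x_0) = \operatorname{conv}(S)$. The key observation is that wherever $Jf(x)$ exists, the Lipschitz condition \eqref{ct1} forces $\|Jf(x)\| \le K$; hence $S$ is contained in the (compact) ball $K\,\mathcal{B}_{n\times m}$.

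First I would show that $S$ is nonempty. By Theorem~\ref{dlr}, the set of points where $f$ is differentiable has full Lebesgue measure near $x_0$, so we can pick a sequence $x_i \to x_0$ at which $Jf(x_i)$ exists. Since $\|Jf(x_i)\| \le K$, Bolzano--Weierstrass yields a convergent subsequence whose limit lies in $S$. Convexity of $\partial f(x_0)$ is immediate from the definition as a convex hull.

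Next I would establish that $S$ itself is closed (and hence compact, being bounded). Given $M_k \in S$ with $M_k \to M$, each $M_k$ arises as a limit $M_k = \lim_i Jf(x_{k,i})$ with $x_{k,i} \to x_0$; by a standard diagonal argument choose indices $i_k$ so that both $\|x_{k,i_k} - x_0\| < 1/k$ and $\|Jf(x_{k,i_k}) - M_k\| < 1/k$. Then $Jf(x_{k,i_k}) \to M$ along a sequence witnessing that $M \in S$.

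Finally, since $S$ is a compact subset of the finite-dimensional space $\mathbf{M}_{n\times m}$, its convex hull $\operatorname{conv}(S) = \partial f(x_0)$ is again compact by Carathéodory's theorem (the map $(\lambda, M_0, \ldots, M_{nm}) \mapsto \sum \lambda_j M_j$ from a compact simplex $\times\, S^{nm+1}$ onto $\operatorname{conv}(S)$ is continuous). The only step requiring genuine care is the diagonal argument for closedness of $S$; the rest is bookkeeping with the Lipschitz bound $K$.
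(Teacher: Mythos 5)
The paper does not prove this proposition; it simply cites Clarke's 1976 paper, and the only hint it offers is the remark following Definition~\ref{dn3} that the Lipschitz condition~\eqref{ct1} makes $\partial f(x)$ bounded near $x_0$. Your proof is correct and is essentially Clarke's argument: the Lipschitz constant $K$ bounds $\|Jf(x)\|$ wherever the Jacobian exists, Rademacher's theorem supplies differentiability points arbitrarily close to $x_0$ so $S$ is nonempty, the diagonal extraction shows $S$ is closed (hence compact), and Carath\'eodory's theorem upgrades compactness of $S$ to compactness of $\operatorname{conv}(S)=\partial f(x_0)$ in the finite-dimensional space $\mathbf{M}_{n\times m}$. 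One small point worth making explicit when you invoke Bolzano--Weierstrass for nonemptiness: after passing to a subsequence along which $Jf(x_{i_k})$ converges, the condition $x_{i_k}\to x_0$ is preserved, so the limiting matrix genuinely belongs to $S$; you use this implicitly, and it is fine, but a referee might want it stated.
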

\begin{lem}[\cite{C1}] \label{bd22}Let $\varepsilon$ be a positive number. Then for all $x$ sufficiently near $x_0$,
\[\partial f(x) \subset \partial f(x_0) + \varepsilon \mathcal{B}_{n \times m}.\]
\end{lem}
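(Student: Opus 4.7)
The plan is a proof by contradiction, exploiting two compactness facts: first that the Jacobians of $f$ are uniformly bounded in a neighborhood of $x_0$ (by the Lipschitz constant $K$), and second that $\partial f(x_0)$ is compact (Proposition \ref{md1}). Suppose the inclusion fails. Then there exist $\varepsilon_0 > 0$, a sequence $x_k \to x_0$, and matrices $M_k \in \partial f(x_k)$ with
\[
\operatorname{dist}(M_k,\partial f(x_0)) \geq \varepsilon_0 \qquad \text{for all } k.
\]
The goal is to extract a subsequence of the $M_k$'s that converges to some element of $\partial f(x_0)$, which will give the contradiction.

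Since $f$ is Lipschitz with some constant $K$ near $x_0$, every Jacobian $Jf(y)$ (when it exists, which by Rademacher's theorem is almost everywhere) satisfies $\|Jf(y)\|\le K$. Consequently $\partial f(x)$ lies in $K\,\mathcal{B}_{n\times m}$ for all $x$ sufficiently close to $x_0$, so in particular the sequence $(M_k)$ is bounded and, after passing to a subsequence, $M_k \to M$ for some matrix $M$. It remains to prove $M \in \partial f(x_0)$, which contradicts the assumption.

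For this the main step is to rewrite each $M_k$ in terms of actual Jacobians of $f$ evaluated at points tending to $x_0$. By Definition \ref{dn2}, $M_k$ lies in the convex hull of limits $\lim_i Jf(y_i)$ with $y_i \to x_k$, so Carath\'eodory's theorem gives a fixed integer $p = nm+1$, nonnegative weights $\lambda_{k,1},\dots,\lambda_{k,p}$ summing to $1$, and matrices $N_{k,1},\dots,N_{k,p}$ of the form $N_{k,j} = \lim_{i} Jf(y_{k,j}^{(i)})$ with $y_{k,j}^{(i)} \to x_k$, such that $M_k = \sum_{j=1}^p \lambda_{k,j} N_{k,j}$. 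By a diagonal selection, for each $k$ and each $j$ I choose $z_{k,j}$ at which $f$ is differentiable, with $\|z_{k,j}-x_k\| < 1/k$ and $\|Jf(z_{k,j}) - N_{k,j}\| < 1/k$. Then $z_{k,j}\to x_0$ as $k\to\infty$ and
\[
\Bigl\| M_k - \sum_{j=1}^p \lambda_{k,j} Jf(z_{k,j}) \Bigr\| \leq \frac{p}{k} \longrightarrow 0.
\]

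Finally, by compactness of the simplex and of $K\,\mathcal{B}_{n\times m}$, I pass to a further subsequence along which $\lambda_{k,j} \to \lambda_j$ and $Jf(z_{k,j}) \to L_j$ for each $j$. Each $L_j$ is the limit of Jacobians at points converging to $x_0$, hence $L_j \in \partial f(x_0)$; since $\sum_j \lambda_j = 1$ and $\partial f(x_0)$ is convex, $\sum_j \lambda_j L_j \in \partial f(x_0)$. Taking limits in the displayed inequality yields $M = \sum_j \lambda_j L_j \in \partial f(x_0)$, contradicting $\operatorname{dist}(M_k,\partial f(x_0)) \geq \varepsilon_0$. The only delicate point is the two-level approximation just described: elements of $\partial f(x_k)$ are not themselves Jacobians but convex combinations of limits of Jacobians, so Carath\'eodory plus the diagonal selection is essential in order to reduce to genuine Jacobian values whose limits lie in $\partial f(x_0)$ by definition.
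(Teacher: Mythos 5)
The paper does not prove Lemma \ref{bd22}; it is stated as a citation to Clarke's 1976 paper \cite{C1}, so there is no ``paper proof'' to compare against. Your argument is nonetheless a correct, self-contained proof, and it follows the standard route one would expect (and essentially the route Clarke himself takes): contradiction, uniform boundedness of the Jacobians by the Lipschitz constant, Carath\'eodory reduction of each $M_k\in\partial f(x_k)$ to a convex combination of a fixed number $p=nm+1$ of limit-Jacobians, a diagonal approximation by genuine Jacobians $Jf(z_{k,j})$ with $z_{k,j}\to x_0$, and compactness of the weight simplex together with $K\mathcal{B}_{n\times m}$ to pass to the limit inside $\partial f(x_0)$. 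The only thing worth flagging is cosmetic: since $\sum_j\lambda_{k,j}=1$, your displayed bound can be tightened from $p/k$ to $1/k$, though as written it still tends to $0$ and the argument goes through. The Carath\'eodory step plus the diagonal selection is exactly the ``delicate point'' you identify, and you handle it correctly; without it one cannot legitimately pass from elements of $\partial f(x_k)$ (which are convex combinations, not themselves limit-Jacobians) to points of $\partial f(x_0)$.
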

\subsection{Topology of Lipschitz mappings}
Let $f: \mathbb{R}^m \rightarrow \mathbb{R}^n$. Then the Lipschitz constant of $f$ is defined by
\[\textrm{L}(f) = \sup \left\{\frac{\|f(x)-f(y)\|}{\|x-y\|}, ~ x \neq y\right\}.\]
Note that $f$ is Lipschitz if and only if $\textrm{L}(f) < \infty$.\\
Set
$$\textrm{Lip}(\mathbb{R}^m, \mathbb{R}^n) = \left\{f: \textrm{L}(f) < +\infty \right\}.$$
For $f, g \in \textrm{Lip}(\mathbb{R}^m,  \mathbb{R}^n)$ and $\alpha \in \mathbb{R}$, we have the following properties:
\begin{itemize}
\item[(i)] $f+g, \alpha f \in \textrm{Lip}(\mathbb{R}^m,  \mathbb{R}^n)$,
\item[(ii)] $\textrm{L}(f) \geq 0$,
\item[(iii)] $\textrm{L}(f+g) \leq \textrm{L}(f) + \textrm{L}(g)$,
\item[(iv)] $\textrm{L}(\alpha f) = \alpha \textrm{L}(f)$,
\item[(v)] $\textrm{L}(f) = 0 \Leftrightarrow f = \textrm{constant}$.
\end{itemize}

\vspace*{8pt}\noindent
By (v), for $x_0 \in \mathbb{R}^m$, set
\[\textrm{Lip}_{x_0}(\mathbb{R}^m,  \mathbb{R}^n) = \left\{f: f ~\textrm{is Lipschitz  and}~ f(x_0) = 0 \right\}.\]
Then
\[L(f) = 0 \Leftrightarrow f \equiv 0, ~~ \textrm{for all}~ f \in \textrm{Lip}_{x_0}(\mathbb{R}^m,  \mathbb{R}^n).\]
Thus $\textrm{Lip}_{x_0} (\mathbb{R}^m, \mathbb{R}^n) $ is a normed vector space with the norm $\textrm{L}(\cdot)$.
\section{Results - Examples}
Applying the results of F. H. Clarke (\cite{C1}), perturbation matrix, the properties of Lipschitz mappings and differentiable mappings, in Theorem \ref{dl2} we present a quantitative form of the Lipschitz inverse function theorem of F. H. Clarke, in Theorem \ref{dl4} we give a quantitative form of the Lipschitz implicit function theorem, openness of the class of Lipschitz mappings satisfying Clarke's inverse function theorem is proved in Theorem \ref{dl3} and Corollary  \ref{hq31}.
%\vspace*{8pt} We let  $ \mathbf{B}$ denote the unit ball in $ \mathbb{R}^n$, and $\mathbf{S}$ denote the unit sphere in $ \mathbb{R}^n $.%, and $\mathcal{M}(0, 1) $ denote the unit ball in $\mathcal{M}$.
\begin{thm}[c.f.   {\cite[Theorem 1]{C1}}] \label{dl2}
Let $f: \mathbb{R}^n \rightarrow\mathbb{R}^n$ be a Lipschitz mapping with Lipschitz constant $K$. If $~\partial f(x_0)$ is of maximal rank, set
\[\delta = \frac{1}{2}\inf_{M_0\in \partial f(x_0)}\frac{1}{\|M_0^{-1}\|},\]
$r$ be chosen so that $f$ satisfies Lipschitz condition (\ref{ct1})  and
$$\partial f(x) \subset \partial f(x_0) + \delta \mathcal{B}_{n\times n},~~ \textrm{when}~~x \in \mathbf{B}_r^n(x_0),$$ then there exist neighborhoods $U$ and $V$ of  $x_0$ and $f(x_0)$, respectively, and a Lipschitz mapping $~g: V \rightarrow \mathbb{R}^n$ such that

(a) $g(f(u)) = u $ for every $u \in U$,

(b) $f(g(v)) = v $ for every $v \in V$,\\
where,
\[U=\mathbf{B}^n_{\frac{r\delta}{2}.\frac{1}{K}}(x_0), ~~V= \mathbf{B}^n_{\frac{r\delta}{2}}(f(x_0) ),~~\textrm{and}~~L(g) = \ds\frac{1}{\delta}.\]
% = \frac{1}{2}\inf_{M_0\in \partial f(x_0)}\nu(M_0),\]
%$r$ was chosen so that $f$ satisfy Lipschitz condition (\ref{ct1})  and $\partial f(x) \subset \partial f(x_0) + \delta \mathcal{M}(0, 1)$ in $\mathbf{B}(x_0, r)$.
\end{thm}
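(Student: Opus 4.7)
The plan is to mirror the structure of Clarke's argument while tracking all constants quantitatively in terms of the data $\delta$, $r$, and $K$. I would first establish the pointwise bound that every $M \in \partial f(x)$ with $x \in \mathbf{B}_r^n(x_0)$ is nonsingular with $\|M^{-1}\| \leq 1/\delta$: by the choice of $r$ one can write $M = M_0 + E$ with $M_0 \in \partial f(x_0)$ and $\|E\| \leq \delta$, and the definition of $\delta$ gives $\|M_0^{-1}\| \leq 1/(2\delta)$, so $\|M_0^{-1}E\| \leq 1/2 < 1$ and Theorem~\ref{dl1} yields invertibility; the norm bound comes from the direct estimate $\|Mv\| \geq \|M_0 v\| - \|Ev\| \geq (2\delta - \delta)\|v\|$ for every unit $v$.

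The main technical step is to upgrade this to the global bi-Lipschitz lower bound $\|f(x) - f(y)\| \geq \delta\|x-y\|$ on $\mathbf{B}_r^n(x_0)$, which is the principal obstacle since $\partial f$ is only set-valued. My plan is to mollify $f^\varepsilon = f * \rho_\varepsilon$: for $x$ at distance exceeding $\varepsilon$ from $\partial \mathbf{B}_r^n(x_0)$, $Df^\varepsilon(x)$ is an average of true Jacobians from $\mathbf{B}_\varepsilon^n(x)$, so by the convexity of $\partial f(x_0) + \delta\mathcal{B}_{n\times n}$ (which follows from Proposition~\ref{md1}) it still lies in that set. The averaged Jacobian $\int_0^1 Df^\varepsilon(y+t(x-y))\,dt$ then also lies in $\partial f(x_0) + \delta\mathcal{B}_{n\times n}$, so Step~1 applied to it together with the fundamental theorem of calculus for the smooth $f^\varepsilon$ yields $\|f^\varepsilon(x) - f^\varepsilon(y)\| \geq \delta\|x-y\|$, and passing $\varepsilon \to 0$ gives the estimate on all of $\mathbf{B}_r^n(x_0)$.

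With the bi-Lipschitz estimate in hand, $f$ is injective on $\mathbf{B}_r^n(x_0)$ and, by invariance of domain, $f(\mathbf{B}_r^n(x_0))$ is open. For $v \in V$ I would use a continuation argument along the segment from $f(x_0)$ to $v$: the set $\{s \in [0,1] : f(x_0) + s(v - f(x_0)) \in f(\mathbf{B}_r^n(x_0))\}$ is open (the image is open), contains $0$, and is closed because any preimage $u_n$ of an intermediate point satisfies $\|u_n - x_0\| \leq \|f(u_n) - f(x_0)\|/\delta < r/2$ by the bi-Lipschitz estimate, so it stays in a compact subset of $\mathbf{B}_r^n(x_0)$ and has a convergent subsequence. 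Connectedness of $[0,1]$ then forces $v \in f(\mathbf{B}_r^n(x_0))$, and I define $g(v)$ to be the unique preimage.

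Finally, (b) holds by construction and (a) follows because any $u \in U$ has $\|f(u) - f(x_0)\| \leq K\|u-x_0\| < r\delta/2$, so $f(u) \in V$ and $g(f(u)) = u$ by injectivity. The Lipschitz bound $L(g) \leq 1/\delta$ is immediate from $\|v_1 - v_2\| = \|f(g(v_1)) - f(g(v_2))\| \geq \delta\|g(v_1) - g(v_2)\|$. The key difficulty throughout is the second step: the mollification reduces the genuinely Lipschitz case to the smooth one, which in turn reduces back to Step~1 via the convexity of $\partial f(x_0)$.
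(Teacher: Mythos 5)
Your argument is correct, and its key technical step differs genuinely from the route the paper (following Clarke) takes. The paper proves the lower bound $\|f(x)-f(y)\|\ge \delta\|x-y\|$ via Lemma~\ref{bd2}: for every direction $v$ a dual unit vector $w$ is produced by the separation theorem for convex sets so that $w\cdot(Mv)\ge\delta$ uniformly over $M\in\partial f(x)$, $x\in\mathbf{B}_r^n(x_0)$, and then Clarke applies a nonsmooth mean value theorem (Lebourg) to the scalar function $w\cdot f$ to transfer the inequality from generalized Jacobians to increments of $f$. You instead bypass the nonsmooth mean value machinery by mollification: $Df^\varepsilon(x)=\int Jf(y)\,\rho_\varepsilon(x-y)\,dy$ is an integral average of true Jacobians $Jf(y)\in\partial f(y)\subset\partial f(x_0)+\delta\mathcal{B}_{n\times n}$, so by convexity and closedness of that set (Proposition~\ref{md1}) the averaged Jacobian $\int_0^1 Df^\varepsilon(y+t(x-y))\,dt$ stays there, the pointwise estimate $\|Mv\|\ge 2\delta\|v\|-\delta\|v\|$ then kicks in, and $\varepsilon\to 0$ recovers the bound for $f$ itself. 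The topological part (injectivity $\Rightarrow$ open image by invariance of domain, then a connectedness/continuation argument along segments with compactness supplied by the bi-Lipschitz bound to show $V\subset f(\mathbf{B}_r^n(x_0))$) and the derivations of $U$, $V$, and $L(g)\le 1/\delta$ match what the paper does. Your route is more self-contained analytically (no Lebourg theorem, no separation lemma), at the cost of the standard but nontrivial facts about $W^{1,\infty}$ mollification and closed convex hulls under integral averages; the paper's route stays entirely inside the Clarke generalized-gradient calculus, which is why it can simply defer to Clarke's Theorem~1 after sharpening Lemma~3 to the explicit $\delta$.
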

First we have:
\begin{lem}[c.f. {\cite[Lemma 3]{C1}}] \label{bd2}
Let $ f: \mathbb{R}^n \rightarrow \mathbb{R}^n $ be a Lipschitz mapping, $\partial f(x_0)$ has maximal rank, set
\[\delta = \frac{1}{2} \inf_{M_0 \in \partial f(x_0)} \frac{1}{\|M_0^{-1} \|},\]
$r$ be chosen so that in $ \mathbf{B}_r^n(x_0)$ $ f $ satisfies the Lipschitz condition (\ref{ct1}) and $ \partial f(x) \subset \partial f(x_0) + \delta \mathcal{B}_{n \times n}$. Then, for every unit vector $v$ in $ \mathbb{R}^n$, there exists a unit vector $w$ in $ \mathbb{R}^n $ such that, whenever $ x$ lies in $x_0 + r \mathbf{B}^n$ and $M$ belongs to $ \partial f(x) $,
\begin{equation}\label{ct4}w.(Mv) \geq \delta.
\end{equation}
\end{lem}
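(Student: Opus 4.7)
The plan is to reduce the statement to two self-contained steps: a pointwise lower bound at $x_0$ coming from the definition of $\delta$, and a perturbation argument pushing this bound from $\partial f(x_0)$ to $\partial f(x)$ via the hypothesis $\partial f(x) \subset \partial f(x_0) + \delta \mathcal{B}_{n \times n}$.

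First I would unpack the definition of $\delta$. For any $M_0 \in \partial f(x_0)$, the identity $\|M_0^{-1}\| = 1/\min_{\|u\|=1}\|M_0 u\|$ recalled in Section 2.1 gives $\|M_0 u\| \geq 1/\|M_0^{-1}\| \geq 2\delta$ for every unit vector $u$. In particular, $\|M_0 v\| \geq 2\delta$ for the fixed unit vector $v$, uniformly in $M_0 \in \partial f(x_0)$.

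Next comes the step where one has to choose $w$ so that a single unit vector works simultaneously for every $M_0 \in \partial f(x_0)$. Consider the image set
\[
C := \{M_0 v : M_0 \in \partial f(x_0)\} \subset \mathbb{R}^n.
\]
By Proposition \ref{md1}, $\partial f(x_0)$ is compact and convex, so $C$ is compact and convex as well, and by the previous step $0 \notin C$. Let $y_0 \in C$ be the point of minimum norm, and set $w := y_0/\|y_0\|$. The projection characterization of $y_0$ yields $\langle y - y_0, -y_0 \rangle \leq 0$ for every $y \in C$, equivalently $\langle y, w \rangle \geq \|y_0\| \geq 2\delta$. Hence $w \cdot (M_0 v) \geq 2\delta$ for every $M_0 \in \partial f(x_0)$.

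It then remains to pass from $\partial f(x_0)$ to $\partial f(x)$ for $x \in x_0 + r\mathbf{B}^n$. By hypothesis every $M \in \partial f(x)$ decomposes as $M = M_0 + E$ with $M_0 \in \partial f(x_0)$ and $\|E\| \leq \delta$. Then
\[
w \cdot (Mv) = w \cdot (M_0 v) + w \cdot (Ev) \geq 2\delta - \|w\|\,\|E\|\,\|v\| \geq 2\delta - \delta = \delta,
\]
which is precisely (\ref{ct4}). I expect the main obstacle to be the middle step: producing one $w$ that works uniformly over all of $\partial f(x_0)$ rather than one tailored to each matrix, and the convex-compact structure of $\partial f(x_0)$ together with the projection-to-nearest-point trick is exactly what makes this possible.
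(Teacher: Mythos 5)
Your proof is correct, and the central idea is the same as the paper's: exploit compactness and convexity of $\partial f(x_0)$ (Proposition \ref{md1}) to produce, via separation of a convex set from the origin, a single unit vector $w$ that works uniformly. The only difference is where you handle the $\delta$-perturbation. The paper first enlarges to $G = \partial f(x_0) + \delta\mathcal{B}_{n\times n}$, checks that $Gv$ still has distance $\geq \delta$ from the origin, and then applies the separation theorem to $Gv$ once. You instead apply the nearest-point form of separation to the smaller set $C = \partial f(x_0)v$ (obtaining the stronger margin $w\cdot(M_0 v)\geq 2\delta$) and then absorb the error term $E$ with $\|E\|\leq\delta$ by hand via Cauchy--Schwarz. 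Both are valid; your version has the minor advantage of making the loss from $2\delta$ to $\delta$ visible as an explicit perturbation estimate, while the paper's version front-loads the perturbation into the convex set so the separation step delivers the final inequality directly.
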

\begin{proof}
By Proposition \ref{md1} and $ \partial f (x_0) $ is of maximal rank, the subset $ \partial f(x_0) \mathbf{S}^{n-1}$ of $ \mathbb{R}^n$ is compact and not containing $0$. For $M_0 \in \partial f(x_0)$, we have
\[\min_{\|x\|=1}\|M_0x\| = \frac{1}{\|M_0^{-1}\|}.\]
Set
\[\delta = \frac{1}{2}\inf_{M_0\in \partial f(x_0)}\frac{1}{\|M_0^{-1}\|},\]
we get $ \partial f(x_0) \mathbf{S}^{n-1}$ distances $2 \delta $ from $0$. \\
If $M \in G = \partial f(x_0) + \varepsilon \mathcal{B}_{n\times n}$, then
\[\min_{\|x\|=1}\|Mx\| \geq \min_{\|x\|=1}\|M_0x\| - \varepsilon.\]
Choosing
\[\varepsilon = \delta = \frac{1}{2}\inf_{M_0\in \partial f(x_0)}\frac{1}{\|M_0^{-1}\|},\]
we get $G \mathbf{S}^{n-1}$ distances at least $\delta$ from $0$. By Lemma \ref{bd22}, there exists a positive number $r$, such that
\begin{equation} \label{ct5}
x \in x_0 + r\mathbf{B}^n \Rightarrow \partial f(x) \subset G.
\end{equation}
Let $r$ be chosen so that $f$ satisfies (\ref{ct1}) on $x_0 + r \mathbf{B}^n$. \\
Thus, let any unit vector $v$ be given, apply the above results, the convex set $Gv$ distances at least $\delta$ from $0$. By the usual separation theorem for convex sets, there exists a unit vector $w$ such that
\[w.(\gamma v) \geq \delta,\]
for every $\gamma \in G$.
Hence, applying (\ref{ct5}) we obtain (\ref {ct4}).
\end{proof}
\begin{proof}[Proof of  Theorem \ref{bd1}]
Using the proof of  \cite[Theorem 1]{C1}  to replace \cite[Lemma 3]{C1}  by  the preceding lemma.\\
\textbf{Estimation of the neighborhood $U$, $V$ and the Lipschitz constant $L(g)$}: \\
According to the proof of \cite[Theorem 1]{C1}, we have
\[L(g) = \ds\frac{1}{\delta},\]
\[V = f(x_0) + (r\delta/2)\mathbf{B}^n=\mathbf{B}^n_{\frac{r\delta}{2}}\left(f(x_0)\right),\]
and choose $U$ being an arbitrary neighborhood of $x_0 $ and satisfying $f(U) \subset V$. Then for all $ x \in U $ we have
\[\|f(x) - f(x_0)\| \leq K\|x-x_0\|\leq \frac{r\delta}{2}.\]
Hence,
\[\|x-x_0\|\leq \frac{r\delta}{2}\frac{1}{K}.\]
So
\[U = \mathbf{B}^n_{\frac{r\delta}{2}\frac{1}{K}}\left(x_0\right).\]
\end{proof}
%------------------------------------------------------------------------------------------------------------------
\begin{rem} When $f$ is $C^1$, $\partial f(x_0)$ reduces to $ Jf(x_0) $, and $g$ is in the class $C^1$. Thus we get the quantitative form of the classical inverse function theorem.
\end{rem}
%------------------------------------------------------------------------------------------------------------
\begin{rem}If $F: A \rightarrow \mathbb{R}^n$ be a Lipschitz mapping in a neighborhood of $(x_0, y_0)$, $A = U \times V$ be a open subset of $\mathbb{R}^m \times \mathbb{R}^n$, then the generalized Jacobian of $F$ at $(x_0, y_0)$ satisfies
\[\partial F(x_0, y_0) \subset \left\{\left(
                                  \begin{array}{cc}
                                    M_1 & M_2 \\
                                  \end{array}
                                \right): M_1 \in \partial_1 F(x_0, y_0), M_2 \in \partial_2 F(x_0, y_0)\right\},\]
where $\partial_1 F(x_0, y_0) ~\textrm{and}~  \partial_2 F(x_0, y_0)$ are generalized Jacobians of $F(\cdot, y_0): U \rightarrow \mathbb{R}^n  ~\textrm{and}~ F(x_0, \cdot): U \rightarrow \mathbb{R}^n$ at $(x_0, y_0)$, respectively.
\end{rem}
%------------------------------------------------------------------------------------------------------------
\begin{thm} \label{dl4}
Let $F: A \rightarrow \mathbb{R}^n$ be a Lipschitz mapping in a neighborhood of $(x_0, y_0)$ with Lipschitz constant $K$, $A = U \times V$ be a open subset of $\mathbb{R}^m \times \mathbb{R}^n$. Suppose that $\partial_2F(x_0, y_0)$ is of maximal rank, $F(x_0, y_0) = 0$, set
\[\delta = \frac{1}{2} \inf_{M_2 \in \partial_2 F(x_0, y_0)}\frac{1}{(m+(1+mK^2)n\| M_2^{-1}\|^2)^\frac{1}{2}},\]
 $r$ be chosen so that  $F$ satisfies Lipschitz condition (\ref{ct1}) and $$ \partial F (x, y) \subset \partial F(x_0, y_0) + \delta \mathcal{B}_{n\times(m+n)}, ~\textrm{when}~ (x, y) \in  \mathbf{B}_r^{m+n}((x_0, y_0)).$$
Then there exist a Lipschitz mapping $g: U_0 \rightarrow V$ defined in a neighborhood $U_0 \subset \mathbb{R}^m$ of $x_0 $ such that $g(x_0) = y_0$ and
\[F(x, g (x)) = 0,\]
for all $ x \in U_0$. \\
Moreover,
\[U_0 = \mathbf{B}^m_{\frac{r  \delta}{2} \frac{1}{K +1} }\left (x_0 \right ), ~\textrm{and}~~ L(g) \leq  \sup_{M_2 \in \partial_2 F(x_0, y_0)} K \|M_2^{-1}\|.\]
 \end{thm}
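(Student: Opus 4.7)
The plan is to reduce Theorem \ref{dl4} to the quantitative inverse function theorem Theorem \ref{dl2} by the classical auxiliary-mapping trick. Define
\[
\Phi : \mathbb{R}^m\times\mathbb{R}^n \longrightarrow \mathbb{R}^m\times\mathbb{R}^n, \qquad \Phi(x,y) = (x,F(x,y)).
\]
Because the first $m$ coordinates of $\Phi$ are the identity, any local Lipschitz inverse $\Psi$ of $\Phi$ near $\Phi(x_0,y_0)=(x_0,0)$ is forced to have the form $\Psi(x,z)=(x,h(x,z))$ with $F(x,h(x,z))=z$, so that $g(x):=h(x,0)$ will be the desired implicit function satisfying $F(x,g(x))=0$ and $g(x_0)=y_0$.

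First I would check that Theorem \ref{dl2} applies to $\Phi$ at $(x_0,y_0)$ with exactly the $r$ and $\delta$ given in the statement. The Lipschitz constant is $L(\Phi)\leq\sqrt{1+K^{2}}\leq K+1$. By the remark preceding the theorem, every $N\in\partial\Phi(x_0,y_0)$ has the block form
\[
N \;=\; \begin{pmatrix} I_m & 0 \\ M_1 & M_2 \end{pmatrix}, \qquad (M_1,M_2)\in\partial F(x_0,y_0),
\]
and invertibility of $M_2$ forces $N$ to be invertible with
\[
N^{-1} \;=\; \begin{pmatrix} I_m & 0 \\ -M_2^{-1}M_1 & M_2^{-1} \end{pmatrix}.
\]
Using properties (i),(ii) of the matrix norms, the inequality $\|M_2^{-1}\|_F\leq\sqrt n\,\|M_2^{-1}\|$, and the fact that every matrix in a generalized Jacobian of a $K$-Lipschitz mapping has operator norm $\leq K$ (so $\|M_1\|_F\leq\sqrt m\,K$), a direct computation yields
\[
\|N^{-1}\|^{2} \;\leq\; \|N^{-1}\|_F^{2} \;\leq\; m+n(1+mK^{2})\|M_2^{-1}\|^{2}.
\]
Taking the supremum in $M_1$ and infimum in $M_2$ shows that the number $\delta$ defined in Theorem \ref{dl4} is bounded above by $\tfrac{1}{2}\inf_{N}1/\|N^{-1}\|$, which is precisely the constant required by Theorem \ref{dl2}. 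The inclusion $\partial\Phi(x,y)\subset\partial\Phi(x_0,y_0)+\delta\mathcal{B}_{(m+n)\times(m+n)}$ on $\mathbf{B}_r^{m+n}(x_0,y_0)$ transfers from the hypothesis on $F$ via the block form.

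Applying Theorem \ref{dl2} to $\Phi$ then yields a Lipschitz inverse $\Psi$ on the image ball $\mathbf{B}^{m+n}_{r\delta/2}(x_0,0)$, defined on the source ball of radius $r\delta/(2L(\Phi))$ about $(x_0,y_0)$, with $L(\Psi)\leq 1/\delta$. Setting $g(x):=\Psi_2(x,0)$ and using $L(\Phi)\leq K+1$, the map $g$ is well defined on $U_0=\mathbf{B}^m_{r\delta/(2(K+1))}(x_0)$, satisfies $F(x,g(x))=0$ and $g(x_0)=y_0$, and trivially has $L(g)\leq L(\Psi)\leq 1/\delta$. The sharper bound $L(g)\leq\sup_{M_2\in\partial_2 F(x_0,y_0)}K\|M_2^{-1}\|$ claimed in the statement is not produced by this route, and getting it is where I expect the main difficulty to lie. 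The plan there is a nonsmooth mean-value argument applied to the identity $F(x,g(x))-F(x',g(x'))=0$: Clarke's vector-valued generalized mean value theorem yields a convex combination $(\bar M_1,\bar M_2)$ of elements of $\partial F$ along the segment joining $(x,g(x))$ and $(x',g(x'))$ with $\bar M_1(x-x')+\bar M_2(g(x)-g(x'))=0$. Since $\bar M_2$ lies within $\delta$ of $\partial_2 F(x_0,y_0)$, Theorem \ref{dl1} guarantees its invertibility and controls $\|\bar M_2^{-1}\|$ in terms of $\sup_{M_2}\|M_2^{-1}\|$, after which $g(x)-g(x')=-\bar M_2^{-1}\bar M_1(x-x')$ and $\|\bar M_1\|\leq K$ together deliver the stated estimate.
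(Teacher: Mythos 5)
Your proposal follows exactly the same route as the paper: define the auxiliary map $\Phi(x,y)=(x,F(x,y))$, exhibit the block form of $N\in\partial\Phi(x_0,y_0)$ and its inverse, derive the Frobenius-norm bound $\|N^{-1}\|_F^2\leq m+(1+mK^2)n\|M_2^{-1}\|^2$ (so that the $\delta$ of the statement is dominated by the constant $\Delta=\tfrac12\inf_N 1/\|N^{-1}\|$ needed for Theorem~\ref{dl2}), transfer the inclusion hypothesis through the block structure, apply Theorem~\ref{dl2}, and read off $g(x)=h(x,0)$ together with $U_0$ by projection. This is the paper's proof, step for step.

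On the one point where you flag a gap, the Lipschitz bound $L(g)\leq\sup_{M_2}K\|M_2^{-1}\|$: you are right that the inverse-function route only yields $L(g)\leq 1/\delta$, which is strictly weaker. The paper does not in fact close this gap rigorously either; it simply invokes the formula $Dg=-(\partial F/\partial y)^{-1}\partial F/\partial x$ ``whenever $(\partial F/\partial y)^{-1}$ exists'' and then asserts the bound, leaving the nonsmooth justification implicit. Your proposed nonsmooth mean-value argument (via a Clarke-type mean value inclusion applied to $F(x,g(x))-F(x',g(x'))=0$, together with Theorem~\ref{dl1} to control $\|\bar M_2^{-1}\|$ when $\bar M_2$ is $\delta$-close to $\partial_2 F(x_0,y_0)$) is a reasonable way to make the paper's heuristic precise; the one caution is that for vector-valued Lipschitz maps the mean value statement is an inclusion into a convex hull of Jacobians rather than an equality with a single matrix, so the argument needs to be phrased in terms of a convex combination $\sum\lambda_i(M_1^i,M_2^i)$ with the resulting $\bar M_2=\sum\lambda_i M_2^i$ still lying in $\partial_2 F(x_0,y_0)+\delta\mathcal B_{n\times n}$ by convexity. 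With that adjustment your outline matches the intent of the paper and is, if anything, more careful than the printed proof at this step.
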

 %------------------------------------------------------------------------------
 \begin{proof}\textit{ }\\
\textbf{ 1.} Set $f(x, y) = (x, F (x, y)), ~ \textrm{for all}~ (x, y) \in U \times V$.
 Since $F$ is Lipschitz, according to Radamacher's theorem, there exists the generalized Jacobian of $F$ at $(x_0, y_0)$.
In a neighborhood of $(x_0, y_0)$, $F$ is almost everywhere differentiable, therefore existing generalized Jacobian of $f$ at $(x_0, y_0)$ and
\[\partial f(x_0, y_0) \subset \left\{\left(\begin{array}{cc}
                                       I_m & 0 \\
                                       M_1 & M_2
                                     \end{array}
\right): \ \ M_1 \in \partial_1 F(x_0, y_0), \ \ M_2 \in \partial_2 F(x_0, y_0) \right\}.\]
Then $\partial f(x_0, y_0)$ is of maximal rank, because $\partial_2 F(x_0, y_0)$ is of maximal rank.

\vspace*{8pt} \noindent
\textbf{2.} For $M = \left(\begin{array}{cc}
                                       I_m & 0 \\
                                       M_1 & M_2
                                     \end{array}
\right) \in \partial f(x_0, y_0)$,
\[M^{-1} = \left(
             \begin{array}{cc}
               I_m & 0 \\
               -M_2^{-1}M_1 & M_2^{-1} \\
             \end{array}
           \right).
\]
Therefore, we have
\[\ds\begin{array}{rcl}
    \|M^{-1}\| \leq \|M^{-1}\|_F & = & \left(m + \|M_2^{-1}M_1\|_F^2 + \|M_2^{-1}\|_F^2\right)^\frac{1}{2} \\
     & \leq & \left(m + \|M_2^{-1}\|_F^2( \|M_1\|_F^2 + 1)\right)^\frac{1}{2} \\
     & \leq & \left(m + \|M_2^{-1}\|_F^2( m\|M_1\|^2 + 1)\right)^\frac{1}{2} \\
     & \leq & \left(m + (1 + mK^2) \|M_2^{-1}\|_F^2\right)^\frac{1}{2}.
  \end{array}
\]
Thus
\[\frac{1}{\|M^{-1}\|} \geq \frac{1}{\left(m + (1 + mK^2)\|M_2^{-1}\|_F^2\right)^\frac{1}{2}}\geq \frac{1}{\left(m + (1 + mK^2)n\|M_2^{-1}\|^2\right)^\frac{1}{2}}.\]
Set
\[\Delta = \frac{1}{2}\inf_{M\in \partial f(x_0, y_0)}\frac{1}{\|M^{-1}\|},\]
we get
\begin{equation}\label{ct341}\Delta \geq \delta.
\end{equation}

\vspace*{8pt} \noindent
\textbf{3.} According to the theorem, we have
$$ \partial F (x, y) \subset \partial F(x_0, y_0) + \delta \mathcal{B}_{n\times (m+n)}, ~\textrm{when}~ (x, y) \in  \mathbf{B}^{m+n}_r((x_0, y_0)),$$
by (\ref{ct341}), we get
$$ \partial F (x, y) \subset \partial F(x_0, y_0) + \Delta  \mathcal{B}_{n\times (m+n)}, ~\textrm{when}~ (x, y) \in  \mathbf{B}^{m+n}_r((x_0, y_0)).$$
Therefore, we can chose $r$ so that $f$ satisfies Lipschitz condition (\ref{ct1}) and
\[\partial f(x, y) \subset \partial f(x_0, y_0) + \Delta \mathcal{B}_{(m+n)\times(m+n)}, ~\textrm{when}~ (x, y) \in  \mathbf{B}^{m+n}_r((x_0, y_0)).\]

\vspace*{8pt} \noindent
\textbf{4.} Since $F$ is a Lipschitz mapping with coefficient $K$, $f$ is Lipschitz with coefficient $K +1$. Applying Theorem \ref{dl2}, $f$ is locally invertible and
\[f^{-1}(x, z) = (x, h(x, z)),\]
with $h$ is a Lipschitz mapping. Define
\[g(x) = h(x, 0).\]
Then $g$ is Lipschitz and
\[(x, F(x, g (x))) = f(x, g(x)) = f(x, h(x, 0)) = f (f ^{-1}(x, 0 )) = (x, 0).\]
This indicates the existence of $g$ satisfying the requirements of the theorem.

\vspace*{8pt} \noindent
\textbf{5.} Estimation of the neighborhood $U_0 $ of $ x_0 $ and $L(g)$:\\
Applying Theorem \ref{dl2},  we obtain
\[(x, h(x, z)) \in U= \mathbf{B}^{m+n}_{\frac{r \Delta}{2} \frac{1}{k +1}} \left((x_0, y_0)\right).\]
Thus
\[(x, g(x)) \in U'= \mathbf{B}^{m+n}_{\frac{r \Delta}{2} \frac{1}{k +1}}\left((x_0, 0)\right).\]
Hence, by (\ref{ct341}), the theorem is satisfied for all $(x, g(x)) \in U'' = \mathbf{B}^{m+n}_{\frac{r \delta}{2} \frac{1}{k +1}}\left((x_0, 0)\right).$
Projecting $U''$ onto the space $ \mathbb{R}^m$, we get
\[U_0 = \mathbf{B}^m_{\frac{r  \delta}{2} \frac{1}{k +1}} \left (x_0\right ).\]
Moreover, applying the formula of implicit function derivative,
\[Dg = -\left( \frac{\partial F}{\partial y}\right)^{-1} \frac{\partial F}{\partial x}, ~~\textrm{whenever}~~ \left( \frac{\partial F}{\partial y}\right)^{-1}~~\textrm{exist}. \]
Therefore, we get
\[L(g) \leq  \sup_{M_2 \in \partial_2 F(x_0, y_0)} K \|M_2^{-1}\|.\]
 \end{proof}
 %------------------------------------------------------------------------------------------------------------------
\begin{rem} When $F$ is $C^1$, $\partial_2 F(x_0, y_0)$ reduces to $ J_2F(x_0, y_0) $, and $g$ is in the class $C^1$. Thus we get the quantitative form of the classical implicit function theorem.
\end{rem}
%-------------------------------------------------------------------------------------------------------------
\begin{exa}
For $m = 1, n=2$, consider $F(x, y, z) = (2x+|y|+3y, 2x+|z|+3z)$ in $\mathbf{B}^3\left((0, 0, 0)\right)$. Then
\[\|F(x, y, z) - F(x', y', z')\| \leq \sqrt{24}\|(x, y, z) - (x', y', z')\|.\]
Thus $F$ is Lipschitz with Lipschitz constant $K = \sqrt{24}$.\\
We have
\[J_2F(x_i, y_i, z_i) = \left(
                   \begin{array}{cc}
                     \frac{|y_i|}{y_i}+3 & 0 \\
                     0 &  \frac{|z_i|}{z_i}+3  \\
                   \end{array}
                 \right),~~ (x_i, y_i, z_i) ~\textrm{near}~(0, 0, 0).
\]
Hence,
\[\partial_2F(0, 0, 0)=\left\{\left(
                            \begin{array}{cc}
                              s+3 & 0 \\
                              0 & t+3 \\
                            \end{array}
                          \right): -1 \leq s \leq 1, -1 \leq t \leq 1
\right\},\]
and $\partial_2F(0, 0, 0)$ is of maximal rank.\\
Let $M_2 \in \partial_2F(0, 0, 0)$. Then there exist $M_2^{-1}$ defined by
\[M_2^{-1} = \left(
                            \begin{array}{cc}
                              \frac{1}{s+3} & 0 \\
                              0 &  \frac{1}{t+3} \\
                            \end{array}
                          \right).\]
So
\[\delta = \frac{1}{2} \inf_{M_2 \in \partial_2 F(x_0, y_0)}\frac{1}{(m+(1+mK^2)n\| M_2^{-1}\|^2)^\frac{1}{2}} = \frac{1}{\sqrt{54}}.\]
By the preceding theorem, there exist a Lipschitz mapping $g$ such that $g(0) = (0, 0)$ and
\[F(x, g(x)) = (0, 0), ~~\textrm{for all}~x \in U_0.\]
Moreover, for $(x, y, z)$ near $(0, 0, 0)$, we have
\[JF(x, y, z) = \left(
                   \begin{array}{ccc}
                    2& \frac{|y|}{y}+3 & 0 \\
                    2& 0 &  \frac{|z|}{z}+3  \\
                   \end{array}
                 \right).\]
                 We can chose $r = 1$, and then
               \[ \partial F(x, y, z) \subset \partial F(0, 0, 0) + \frac{1}{\sqrt{54}} \mathcal{B}_{2\times 3}, ~\textrm{every}~ (x, y, z) \in  \mathbf{B}^3_r((0,0, 0)).\]
Hence, we obtain
\[U_0 = \mathbf{B}^1_{\frac{1}{6\sqrt{6}}\cdot \frac{1}{1+2\sqrt{6}}}\left(0 \right), ~\textrm{and}~ L(g) \leq \sqrt{6}.\]
\end{exa}
%-------------------------------------------------------------------------------------------------------------
\begin{thm}\label{dl3}
Let $f_0: \mathbb{R}^n \rightarrow \mathbb{R}^n$ be a Lipschitz mapping in the neighborhood of $x_0$ so that $\partial f_0(x_0)$ is of maximal rank and satisfies
\[K \| x-y \| \leq \| f_0(x)-f_0(y) \| \leq K' \| x-y \|.\]
Let $f = f_0 + h$, with $ h: \mathbb{R}^n \rightarrow \mathbb{R}^n$ is a Lipschitz mapping with Lipschitz constant L so that
\[L < K.\]
Set
\[\delta = \frac{1}{2} \inf_{M_0 \in \partial f(x_0)} \frac{1}{\| M_0^{-1}\|}.\]
Suppose that  $~r$ was chosen so that  $f$ satisfies Lipschitz condition (\ref{ct1}) and $\partial f(x) \subset \partial f(x_0) + \delta \mathcal{B}_{n\times n}$ in $\mathbf{B}^n_r(x_0)$.
Then there exist neighborhoods $U$ and $V$ of $x_0$ and $f(x_0)$, respectively, and a Lipschitz mapping $g: V \rightarrow \mathbb{R}^n$ such that

\vspace*{8pt}
(a) $g(f(u)) = u$ for every $u \in U$,

\vspace {8pt}
(b) $f(g(v)) = v $ for every $v \in V$.\\
Moreover, $U, V$ and $L(g)$ are determined by
\[U = \mathbf{B}^n_{\frac{r \delta}{2} \frac{1}{K '+ L} }\left(x_0\right ),~~~~ V = \mathbf{B}^n_{\frac{r \delta}{2}} \left (f(x_0)\right), ~~\textrm{and}~~L(g) = \ds\frac{1}{\delta}.\]
\end{thm}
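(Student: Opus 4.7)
The plan is to recognize the theorem as essentially a direct application of Theorem \ref{dl2} to the perturbed mapping $f = f_0 + h$; most of the work consists in verifying that $f$ meets the three hypotheses of that theorem, after which the quantitative conclusions follow by reading off the formulas. For the Lipschitz constant I would use $L(f) \le L(f_0) + L(h) \le K' + L$, which is exactly the quantity that will appear in the denominator of the radius of the neighbourhood $U$.

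The heart of the proof is the verification that $\partial f(x_0)$ is of maximal rank, so that the infimum defining $\delta$ is a positive number. By Clarke's subadditivity of the generalized Jacobian, every $M \in \partial f(x_0)$ can be decomposed as $M = M_0 + H$ with $M_0 \in \partial f_0(x_0)$ and $H \in \partial h(x_0)$, where $\|H\| \le L$ because $L(h) = L$. The lower bi-Lipschitz bound $K\|x-y\| \le \|f_0(x)-f_0(y)\|$ forces $\|Jf_0(x) v\| \ge K\|v\|$ at every point of differentiability of $f_0$, and by passage to the limit one obtains $\|M_0^{-1}\| \le 1/K$ for every $M_0 \in \partial f_0(x_0)$. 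Since $\|M_0^{-1} H\| \le L/K < 1$ by the assumption $L < K$, Theorem \ref{dl1} applies and yields that $M = M_0 + H$ is nonsingular with $\|M^{-1}\| \le 1/(K-L)$; hence $\partial f(x_0)$ is of maximal rank and $\delta \ge (K-L)/2 > 0$.

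With all three hypotheses of Theorem \ref{dl2} in hand for $f$ (Lipschitz constant $K' + L$, maximal-rank generalized Jacobian, and the stipulated $r$), I would simply invoke that theorem. Its conclusion produces the inverse $g$ with $V = \mathbf{B}^n_{r\delta/2}(f(x_0))$, $L(g) = 1/\delta$, and $U = \mathbf{B}^n_{\frac{r\delta}{2} \cdot \frac{1}{K'+L}}(x_0)$---the last formula because any $x \in U$ must satisfy $\|f(x)-f(x_0)\| \le r\delta/2$, and the Lipschitz bound on $f$ converts this into a radius of $(r\delta/2)/(K'+L)$, exactly as in the proof of Theorem \ref{dl2}. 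The main obstacle is the propagation step in the middle paragraph: the estimate $\|M_0^{-1}\| \le 1/K$ is immediate for $M_0$ that is a single limit of Jacobians of $f_0$, but extending it to the full convex hull $\partial f_0(x_0)$ requires extra care, since the set of matrices with smallest singular value bounded below by $K$ is not convex. Here one should exploit the orientation-preserving character of a locally bi-Lipschitz homeomorphism, which confines the limit Jacobians to a single component of $GL_n(\mathbb{R})$ and thereby tames the behaviour of the minimum singular value under the convex-hull operation.
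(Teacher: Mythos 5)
Your proposal follows essentially the same route as the paper's own proof: you bound the Lipschitz constant of $f=f_0+h$ by $K'+L$, decompose every $M\in\partial f(x_0)$ as $M=M_0+H$ with $M_0\in\partial f_0(x_0)$ and $H\in\partial h(x_0)$ via Clarke's sum rule, bound $\|H\|\le L$, bound $\|M_0^{-1}\|\le 1/K$, invoke Theorem~\ref{dl1} to get nonsingularity of $M$, and then read off $U$, $V$, $L(g)$ from Theorem~\ref{dl2}. Up to here you are tracking the paper exactly.

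You are right, however, that the propagation step $\|M_0^{-1}\|\le 1/K$ for all $M_0\in\partial f_0(x_0)$ is a genuine gap --- and you should be aware that the paper itself makes the same leap without comment. The lower bi-Lipschitz bound gives $\min_{\|v\|=1}\|Jf_0(x_i)v\|\ge K$ at points of differentiability, hence $\min_{\|v\|=1}\|M_0v\|\ge K$ for any \emph{limit} of such Jacobians; but $\partial f_0(x_0)$ is the \emph{convex hull} of those limits, and $M\mapsto\min_{\|v\|=1}\|Mv\|$ is not concave, so the bound does not automatically survive the hull operation. By contrast, the companion bound $\|H\|\le L$ does survive, because the operator norm is convex.

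Unfortunately your proposed repair does not work. Confining the limit Jacobians to a single connected component of $GL_n(\mathbb{R})$ does not control the minimum singular value of convex combinations. In $n=2$, both $I$ and the rotation $R_{\pi/2}$ lie in $GL_2^+(\mathbb{R})$ and have smallest singular value $1$, yet $\tfrac12(I+R_{\pi/2})$ has both singular values equal to $1/\sqrt2$; pushing further, $I$ and $-I=R_\pi$ are both orientation-preserving with smallest singular value $1$, and their midpoint is the zero matrix. So ``same component of $GL_n$'' buys you nothing here. A sound repair is to strengthen the hypothesis to $L<\inf_{M_0\in\partial f_0(x_0)}\|M_0^{-1}\|^{-1}$ (a positive quantity, by Proposition~\ref{md1} and maximal rank of $\partial f_0(x_0)$, but possibly smaller than $K$); under that hypothesis the perturbation argument goes through verbatim. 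As written, neither your argument nor the paper's establishes $\|M_0^{-1}\|\le 1/K$ over the full convex hull.
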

%------------------------------------------------------------------------------------------------------
\begin{proof}\textit{ }\\
\textit{Remark}: $f$ is differentiable at $x_i$ if and only if $f$ can be approximated in a neighborhood of $x_i$ by the affine mapping
\[T(x) = f(x_i) + Jf(x_i)(x-x_i).\]
Hence,
\[K\|x-x_i\| \leq \|f(x) - f(x_i)\| \approx \|Jf(x_i)(x-x_i)\| \leq K'\|x-x_i\|.\]
From which we obtain
\begin{equation}\label{ct2}K \leq \inf_{y\neq 0}\frac{\|Jf(x_i)y\|}{\|y\|}, \forall Jf(x_i) \Rightarrow K \leq \inf_{Jf(x_i)}\left(\min_{\|x\|=1}\|Jf(x_i)x\|\right),
\end{equation}
\begin{equation}\begin{array}{ccl}
                  \sup_{y\neq 0}\frac{\|Jf(x_i)y\|}{\|y\|}\leq K', \forall Jf(x_i) & \Rightarrow & \sup_{Jf(x_i)}\left(\max_{\|x\|=1}\|Jf(x_i)x\| \right)\leq K' \\
                  \\
                  & \Rightarrow & \sup_{Jf(x_i)}\|Jf(x_i)\| \leq K'.
                \end{array}
    \label{ct3}
\end{equation}

\vspace*{8pt}
\noindent
From the remarks above we will prove that $f$ is a Lipschitz mapping and  $\partial f(x_0)$ is of maximal rank. \\
We have
\[\begin{array}{rcl}
    \|f(x)-f(y)\| & = & \|(f_0(x)-f_0(y))+(h(x)-h(y))\| \\
     & \leq & \|(f_0(x)-f_0(y))\|+\|(h(x)-h(y))\| \\
     & \leq & (K'+L)\|x-y\|.\\
     \end{array}
\]
So $f$ is Lipschitzian with Lipschitz constant $K'+L$.

\vspace*{8pt}
\noindent
According to Rademacher's theorem, $f$ is almost everywhere differentiable near $x_0$, so that the general Jacobian $ \partial f(x_0)$ exits. Moreover,
\[\partial f(x_0) \subset \partial f_0(x_0) + \partial h(x_0).\]
Indeed, let $E$ be the set of points where $f_0$ or $h$ is fail to be differentiable. Then every $M \in \partial f(x_0), M$ has the form
\[M = \lim_{i\rightarrow \infty}J(f_0+h)(x_0+h_i), h_i \rightarrow 0 ~\textrm{when}~i \rightarrow \infty,\]
here the sequence $ \{x_0 + h_i \} $ lies  in the complement of $E$, and admits a subsequence $\{x_0 + h_{n_i}\}$ such that $Jf_0(x_0 + h_{n_i})$ and $Jh(x_0 + h_{n_i})$ both exist and converge. Hence
 \[M= \lim_{i\rightarrow \infty}Jf_0(x_0+h_{n_i}) + \lim_{i\rightarrow \infty}Jh(x_0+h_{n_i}) = M_0+H,\]
where $M_0 \in \partial f_0(x_0), H \in \partial h(x_0)$.

\vspace*{8pt}
\noindent
Next, we prove $ \partial f(x_0) $ is of maximal rank: \\
Using (\ref{ct2}) and (\ref{ct3}) we have
\[\ds\begin{array}{rcrcl}
    L<K & \Rightarrow & \sup_{H\in \partial h(x_0)}\|H\| & < & \inf_{M_0\in\partial f_0(x_0)}\left(\min_{\|x\|=1}\|M_0x\|\right) \\
     & \Rightarrow & \sup_{H\in \partial h(x_0)}\|H\| & < &\inf_{M_0\in\partial f_0(x_0)} \left( \frac{1}{\frac{1}{\min_{\|x\|=1}\|M_0x\|}}\right) \\
          & \Rightarrow & \sup_{H\in \partial h(x_0)}\|H\| & < &\frac{1}{ \sup_{M_0\in\partial f_0(x_0)} \left(\frac{1}{\min_{\|x\|=1}\|M_0x\|}\right)} \\
    & \Rightarrow & \sup_{H\in \partial h(x_0)}\|H\| & < & \frac{1}{\sup_{M_0\in\partial f_0(x_0)}\|M_0^{-1}\|},
     \end{array}
\]
so we get
\[\sup_{H\in \partial h(x_0), M_0\in \partial f_0(x_0)}\|M_0^{-1} H\| < 1. \]
According to Theorem \ref{dl1}, $M_0 + H$ is of maximal rank for all $H\in \partial h(x_0), M_0\in \partial f_0(x_0)$.\\
According to the proof above,  if $M \in \partial f(x_0)$ then $M = M_0 + H$, with $M_0\in \partial f_0(x_0), H\in \partial h(x_0)$. Hence, $M$ is of maximal rank for every $M\in \partial f(x_0)$.\\
So $\partial f(x_0)$ is of maximal rank.

\vspace*{8pt}
\noindent
Thus $f$ is Lipschitz and $ \partial f(x_0)$ is of maximal rank, applying Theorem \ref{dl2}, we get the results of the theorem.

\vspace*{8pt}
\noindent
Moreover, according to the proof of Theorem \ref{dl2}, we have
\[L(g) = \frac{1}{\delta}, \ \ \ \ V = f(x_0) + (r\delta/2)\mathbf{B}^n=\mathbf{B}^n_{\frac{r\delta}{2}}\left(f(x_0)\right),\]
and choose $U$ being an arbitrary neighborhood of $x_0 $ and satisfying $f(U) \subset V$. Then for all $ x \in U $ we have
\[\|f(x) - f(x_0)\| \leq (K'+L)\|x-x_0\|\leq \frac{r\delta}{2}.\]
Hence,
\[\|x-x_0\|\leq \frac{r\delta}{2}\frac{1}{K'+L}.\]
So
\[U = \mathbf{B}^n_{\frac{r\delta}{2}\frac{1}{K'+L}}\left(x_0\right).\]
\end{proof}
\begin{rem}If $ L> K $, then $f = f_0 + h $ may not satisfy local invertible condition.
\end{rem}
\begin{cor}\label{hq31} The class of Lipschitz mappings satisfying Clarke's inverse function theorem  is open in the space
$\textrm{Lip}_{x_0}(\mathbb{R}^m,  \mathbb{R}^n)$.
\end{cor}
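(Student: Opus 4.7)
The plan is to apply Theorem \ref{dl3} directly, exhibiting around each member of the class an explicit open ball in the Lipschitz seminorm consisting of admissible perturbations.

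First, I would fix an arbitrary $f_0$ in the class and extract from it the quantitative constant
\[
\delta_0 \;:=\; \frac{1}{2}\inf_{M_0 \in \partial f_0(x_0)} \frac{1}{\|M_0^{-1}\|}.
\]
By Proposition \ref{md1} and the maximal-rank hypothesis on $\partial f_0(x_0)$, the map $M_0 \mapsto \|M_0^{-1}\|$ is continuous on the nonempty compact set $\partial f_0(x_0)$ and so attains a finite maximum; hence $\delta_0 > 0$. Applying Theorem \ref{dl2} to $f_0$ then produces a local Lipschitz inverse $g_0$ with $L(g_0) = 1/\delta_0$, which translates to the local lower bi-Lipschitz estimate
\[
\delta_0\,\|u-v\| \;\le\; \|f_0(u)-f_0(v)\|
\]
on a suitable neighborhood of $x_0$. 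Combined with the Lipschitz constant $K'$ of $f_0$ itself as an upper bound, this places $f_0$ precisely in the setup of Theorem \ref{dl3} with $K=\delta_0$.

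Next, for any $h \in \textrm{Lip}_{x_0}(\mathbb{R}^n,\mathbb{R}^n)$ satisfying $L(h) < \delta_0$, I would invoke Theorem \ref{dl3} with this $f_0$ and this $h$. Its conclusion is exactly that $f_0+h$ has generalized Jacobian of maximal rank at $x_0$ and admits a quantitative Lipschitz inverse; that is, $f_0+h$ belongs to the class. Hence the open ball
\[
\{\,f \in \textrm{Lip}_{x_0}(\mathbb{R}^n,\mathbb{R}^n) : L(f-f_0) < \delta_0\,\}
\]
lies entirely inside the class, and since $f_0$ was arbitrary the class is open.

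The only delicate point I anticipate is justifying that the lower bi-Lipschitz constant $K$ required by Theorem \ref{dl3} may always be taken to be the intrinsic quantity $\delta_0$ associated to $\partial f_0(x_0)$. This follows by tracing through the proof of Theorem \ref{dl2}, which yields $L(g_0)=1/\delta_0$ for the local inverse of $f_0$ and therefore the desired pointwise lower bound on $\|f_0(u)-f_0(v)\|$; no estimates beyond those already developed in the paper are required.
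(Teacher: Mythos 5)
Your proposal is correct and follows the same underlying route as the paper, which presents the corollary without an explicit proof as an immediate consequence of Theorem~\ref{dl3}. The useful contribution of your write-up is that it closes a gap the paper leaves implicit: Theorem~\ref{dl3} requires a \emph{lower} Lipschitz bound $K\|x-y\|\le\|f_0(x)-f_0(y)\|$ as a hypothesis, whereas an arbitrary member $f_0$ of the class is only assumed Lipschitz with $\partial f_0(x_0)$ of maximal rank. You correctly supply this bound by applying Theorem~\ref{dl2} to $f_0$ to obtain a local Lipschitz inverse $g_0$ with $L(g_0)=1/\delta_0$, whence $\delta_0\|u-v\|\le\|f_0(u)-f_0(v)\|$ for $u,v$ in the local neighborhood, and $\delta_0>0$ by compactness of $\partial f_0(x_0)$ (Proposition~\ref{md1}) together with continuity of $M\mapsto\|M^{-1}\|$ on the set of nonsingular matrices. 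With $K=\delta_0$, the ball $\{f: L(f-f_0)<\delta_0\}$ then lies in the class by Theorem~\ref{dl3}. One could even shorten the argument and obtain a slightly larger radius $2\delta_0=1/\sup_{M_0}\|M_0^{-1}\|$ by skipping Theorem~\ref{dl3} entirely: the containment $\partial(f_0+h)(x_0)\subset\partial f_0(x_0)+\partial h(x_0)$ together with Theorem~\ref{dl1} and $\|M_0^{-1}\|\,\|H\|\le \sup\|M_0^{-1}\|\cdot L(h)<1$ already gives maximal rank. But your version is faithful to the paper's structure and is correct; note only that the ambient space should be $\textrm{Lip}_{x_0}(\mathbb{R}^n,\mathbb{R}^n)$ (the appearance of $\mathbb{R}^m$ in the corollary's statement is a typo, since the inverse function theorem requires $m=n$), which you already silently corrected.
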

\begin{exa}
For $n=2$, consider $f_0(x, y) = (|x|+2x, |y|+2y)$ in $\mathbf{B}^2\left((0, 0)\right)$. We have
\[Jf_0(x_i, y_i) = \left(
                   \begin{array}{cc}
                     \frac{|x_i|}{x_i}+2 & 0 \\
                     0 &  \frac{|y_i|}{y_i}+2  \\
                   \end{array}
                 \right),~~ (x_i, y_i) ~\textrm{near}~(0, 0).
\]
Thus
\[\partial f_0(0, 0)=\left\{\left(
                            \begin{array}{cc}
                              s+2 & 0 \\
                              0 & t+2 \\
                            \end{array}
                          \right): -1 \leq s \leq 1, -1 \leq t \leq 1
\right\},\]
and $\partial f_0(0, 0)$ is of maximal rank.\\
We have
\[\begin{array}{ccl}
    \|f_0(x, y) - f_0(x', y')\| & = & \|(|x|+2x, |y|+2y)-(|x'|+2x', |y'|+2y')\| \\
    & = & \|((|x|-|x'|)+2(x-x'), (|y|-|y'|)+2(y-y'))\|.
  \end{array}
\]
Hence,
\[\|(x, y) - (x', y')\| \leq \|f_0(x, y) - f_0(x', y')\| \leq 3\|(x, y) - (x', y')\|. \]
Thus, $f_0$ is Lipschitz with Lipschitz constants $K = 1, K' = 3$.

\vspace*{8pt}
\noindent \textbf{1. Case $L<K$.}
We find $h(x, y) = (\frac{1}{2}|x|, \frac{1}{2}|y|)$ is Lipschitz with Lipschitz constant $L = \frac{1}{2}$ satisfying
\[L < K.\]
Set $f = f_0+h$, we have
\[f(x, y) =( \frac{3}{2}|x|+2|x|, \frac{3}{2}|y|+2y),\]
and $f$ is Lipschitz in $\mathbf{B}^2\left((0, 0)\right)$ with Lipschitz constant $\frac{7}{2}.$\\
Then,
\[Jf(x_i, y_i) = \left(
                   \begin{array}{cc}
                    \frac{3}{2} \frac{|x_i|}{x_i}+2 & 0 \\
                     0 &  \frac{3}{2}\frac{|y_i|}{y_i}+2  \\
                   \end{array}
                 \right),~~ (x_i, y_i) ~\textrm{near}~(0, 0),
\]
\[\partial f(0, 0)=\left\{\left(
                            \begin{array}{cc}
                               s+2 & 0 \\
                              0 &  t+2 \\
                            \end{array}
                          \right): -\frac{3}{2} \leq s \leq \frac{3}{2}, -\frac{3}{2} \leq t \leq \frac{3}{2}
\right\},\]
and $\partial f(0, 0)$ is of maximal rank.\\
So $f = f_0+h$ is locally invertible.\\
We have
\[\delta = \frac{1}{2} \inf_{M_0 \in \partial f(x_0)} \frac{1}{\| M_0^{-1}\|} = 1.\]
Moreover,
\[Jf(x, y) = \left(
                   \begin{array}{cc}
                    \frac{3}{2} \frac{|x|}{x}+2 & 0 \\
                     0 &  \frac{3}{2}\frac{|y|}{y}+2  \\
                   \end{array}
                 \right),~~ (x, y) \in \mathbf{B}^2_r\left((0, 0)\right), ~ r \leq 1.
\]
We can chose $r = 1$, and then
\[\partial f(x, y) \subset \partial f(0, 0) + \mathcal{B}_{2\times 2}, ~~\textrm{when} ~~(x, y) \in \mathbf{B}^2_r\left((0, 0)\right).\]
Hence, we obtain
\[U = \mathbf{B}^2_{\frac{1}{7}} \left((0, 0)\right ),~~~~ V = \mathbf{B}^2_{\frac{1}{2}}\left ((0, 0)\right), ~~\textrm{and}~~L(g) = 1.\]

\vspace*{8pt}
\noindent \textbf{2. Case $L>K$.}
We find $h(x, y) = (\frac{3}{2}|x|, \frac{3}{2}|y|)$ is Lipschitz with Lipschitz constant $L = \frac{3}{2}>K$.
Set $f = f_0+h$, we have
\[f(x, y) =( \frac{5}{2}|x|+2x, \frac{5}{2}|y|+2y),\]
and $f$ is Lipschitz with Lipschitz constant $\frac{9}{2}.$\\
Then,
\[Jf(x_i, y_i) = \left(
                   \begin{array}{cc}
                    \frac{5}{2} \frac{|x_i|}{x_i}+2 & 0 \\
                     0 &  \frac{5}{2}\frac{|y_i|}{y_i}+2  \\
                   \end{array}
                 \right),~~ (x_i, y_i) ~\textrm{near}~(0, 0),
\]
\[\partial f(0, 0)=\left\{\left(
                            \begin{array}{cc}
                              s+2 & 0 \\
                              0 & t +2\\
                            \end{array}
                          \right): -\frac{5}{2} \leq s \leq \frac{5}{2}, -\frac{5}{2} \leq t \leq \frac{5}{2}
\right\},\]
and $\partial f(0, 0)$ is not of maximal rank.
\end{exa}
\noindent {\large{\textbf{Acknowledgements.}}}

\vspace*{8pt}\noindent
The author wishes to thank Professor Ta Le Loi for the suggestion of writing down this paper.
This research is supported  by Vietnam's
National Foundation for Science and Technology Development
(NAFOSTED).

\end{document}